\newcommand{\be}{\begin{equation}}
\newcommand{\ee}{\end{equation}}
\newcommand{\bea}{\begin{eqnarray}}
\newcommand{\eea}{\end{eqnarray}}
\newcommand{\bean}{\begin{eqnarray*}}
\newcommand{\eean}{\end{eqnarray*}}
\newcommand{\brray}{\begin{array}}
\newcommand{\erray}{\end{array}}
\newcommand{\ben}{\begin{equation}{nonumber}}
\newcommand{\een}{\end{equation}{nonumber}}
\newtheorem{dfn}{Definition}[section]
\newtheorem{thm}[dfn]{Theorem}
\newtheorem{lmma}[dfn]{Lemma}
\newtheorem{ppsn}[dfn]{Proposition}
\newtheorem{crlre}[dfn]{Corollary}
\newtheorem{xmpl}[dfn]{Example}
\newtheorem{rmrk}[dfn]{Remark}
\newcommand{\bdfn}{\begin{dfn}}
\newcommand{\bthm}{\begin{thm}}
\newcommand{\blr}{\begin{list}{$($\roman{cnt1}$)$} {\usecounter{cnt1}
        \setlength{\topsep}{0pt} \setlength{\itemsep}{0pt}}}
\newcommand{\bla}{\begin{list}{$($\alph{cnt2}$)$} {\usecounter{cnt2}
       \setlength{\topsep}{0pt} \setlength{\itemsep}{0pt}}}
\newcommand{\bln}{\begin{list}{$($\arabic{cnt3}$)$} {\usecounter{cnt3}
                \setlength{\topsep}{0pt} \setlength{\itemsep}{0pt}}}
\newcommand{\el}{\end{list}}
\newcommand{\blmma}{\begin{lmma}}
\newcommand{\bppsn}{\begin{ppsn}}
\newcommand{\bcrlre}{\begin{crlre}}
\newcommand{\bxmpl}{\begin{xmpl}}
\newcommand{\brmrk}{\begin{rmrk}}
\newcommand{\edfn}{\end{dfn}}
\newcommand{\ethm}{\end{thm}}
\newcommand{\elmma}{\end{lmma}}
\newcommand{\eppsn}{\end{ppsn}}
\newcommand{\ecrlre}{\end{crlre}}
\newcommand{\exmpl}{\end{xmpl}}
\newcommand{\ermrk}{\end{rmrk}}
\newcommand{\IB}{{I\! \! B}}
\newcommand{\IC}{\mathbb{C}}
\newcommand{\IN}{{I\! \! N}}
\newcommand{\IR}{\mathbb{R}}
\newcommand{\innerl}{\left\langle}
\newcommand{\innerr}{\right\rangle}
\newcommand{\cla}{{\cal A}}
\newcommand{\clb}{{\cal B}}
\newcommand{\cld}{{\cal D}}
\newcommand{\cle}{{\cal E}}
\newcommand{\clh}{{\cal H}}
\newcommand{\clk}{{\cal K}}
\newcommand{\cll}{{\cal L}}
\newcommand{\clp}{{\cal P}}
\newcommand{\clt}{{\cal T}}
\newcommand{\clu}{{\cal U}}
\newcommand{\clv}{{\cal V}}
\newcommand{\clw}{{\cal W}}
\newcommand{\clz}{{\cal Z}}
\def\wA{\widetilde{A}}
\def\wC{\widetilde{C}}
\def\a*{{\cal A}_{h,*}}
\def\B{{\cal B}(h)}
\def\B1{{\cal B}_1(h)}
\def\b{{\cal B}^{\rm s.a.}(h)}
\def\b1{{\cal B}^{\rm s.a.}_1(h)}
\newcommand{\ot}{\otimes}
\begin{document}

 \begin{center}
 \Large{\bf{Dilation of arbitrary symmetric quantum dynamical semigroups on $B(\clh)$}}\\
{\large Biswarup Das{\footnote{biswarupnow@gmail.com}\footnote {Indian Statistical Institute, Kolkata.}} }\\
\end{center}
\begin{abstract}
We prove the existence of Hudson Parthasarathy dilation of a quantum dynamical semigroup on $B(\clh),$ which is symmetric with respect to the canonical normal trace on it.
\end{abstract}
\begin{center}
{\bf Keywords:}~quantum stochastics,~quantum dynamical semigroups,~dilation\\
{\bf AMS Subject Classification:~81S25 (46L55 46L60 46N50 60J25)}
\end{center}

\section{Introduction}
Dilation of quantum dynamical semigroups (QDS) using quantum stochastic calculus is one of the most interesting and important problem of Quantum Probability (see \cite{accardi-lewis,dgkbs,krp,hudson-krp}). It is known that a QDS with bounded generator always admits Hudson-Parthasarathy (HP) dilation (see \cite{dgkbs,lindsay-wills}). Construction of such dilation amounts to solving quantum stochastic differential equation (QSDE) with bounded coefficients, and prescribed initial values and proving the unitarity of the solution. Such unitary solution always exists as long as the coefficients are bounded \cite{dgkbs,lindsay-wills}. For a QDS with unbounded generator, no such results are known in general. However, certain sufficient conditions on the unbounded operator coefficients for e.g. \cite[p.174]{dgkbs},\cite{fagnola,chebo-fagnola,sinha-mohari,sinha-mohari1,accardi-kozyrev1} are known using which one can solve QSDE with unbounded coefficients. Using these techniques, the authors of \cite{dgkbs} proved the existence of Hudson-Parthasarathy dilation  of symmetric QDS which are covariant with respect to the action of a Lie group \cite[Theorem 8.1.23]{dgkbs}. The key fact that allowed them to construct such dilations is the existence of a ``nice'' dense subspace within the domain of the adjoints of the coefficients. Such subspaces may not exist in general. In this paper, we will show that in context of $B(\clh),$ symmetry with respect to the canonical trace is sufficient to ensure the existence HP dilation of a QDS and hence the assumption of covariance is not required.  

\section{Notations and terminologies}
\subsection{Quantum Stochastic flows of Hudson-Parthasarathy type}
We shall refer the reader to \cite{krp,accardi-lewis,dgkbs,hudson-krp} and references therein for the basics of quantum stochastic calculus. We will adopt the Hudson-Parthasarathy formalism of quantum stochastic calculus, which we very briefly review here. We will consider the coordinate free version of the quantum stochastic calculus, as discussed in \cite{dgkbs}. All the Hilbert spaces appearing in this article will be separable and for 
a Hilbert space $\clh$ we shall denote by $\Gamma(\clh)$ and $\Gamma^f(\clh),$ the symmetric and free Fock space over $\clh$ respectively. ${\rm Lin}(\clv,\clw)$ will denote the space of linear(possibly defined on a subset of $\clv$) maps from a vector space $\clv$ to another vector space $\clw$. By $Dom(L)$ and $Ran(L),$ we will denote respectively the domain and range of a possibly unbounded operator $L$ on a Banach space.

\bdfn
We say that a family of maps $(X_t)_{t\geq0}$ belonging to\\ $Lin(h\ot\Gamma,h\ot\Gamma)$ (where $\Gamma:=\Gamma(L^2(\IR_+,k_0))$), is a Hudson-Parthasarathy flow, where $k_0$ is the Hilbert space ($1\leq dim~k_0\leq\infty$) of noise or multiplicity, if it satisfies a quantum stochastic differential equation (QSDE for short) of the form :
\be\label{qsde}
dX_t=X_t(a_R(dt)+a^\dagger_S(dt)+\Lambda_T(dt)+Adt)
\ee
with prescribed initial value $X_0=\tilde{X}_0\ot1$ where $\tilde{X}_0\in B(h),$ $R,S\in Lin(h,h\ot k_0),$\\ $T\in Lin(h\ot k_0,h\ot k_0)$ and $A\in Lin(h,h).$\\
\edfn
Here $a_R(dt),~a^\dagger_S(dt),~and~\Lambda(dt),$ are the annihilation, creation and number fields respectively.
We refer the reader to \cite{dgkbs} for a detailed discussion on the coordinate free formalism of quantum stochastic calculus.

We note that the above QSDE has to be interpreted as the strong integral equation: 
$$X_t(ve(g))=X_0(ve(g))+\left(\int_0^t X_s(a_R(ds)+a^\dagger_S(ds)+\Lambda_T(ds)+Ads\right)(ve(g)),$$ for all $v\in h,~g\in L^2(\IR_+,k_0)$ with $\int_0^t\|g(s)\|^4ds<\infty$ for all $t.$ $~R,S,T~and~A$ will be called the coefficients associated with a QSDE of the form (\ref{qsde}).\\

\subsection{Quantum Dynamical Semigroup}
Let $\cla$ be a unital $C^*$ or von-Neumann algebra. A semigroup of bounded operators $(T_t)_{t\geq0}$ on $\cla$ will be called a quantum dynamical semigroup (QDS for short), if $T_t$ is a strongly continuous (in the norm or ultraweak topology accordingly as $\cla$ is a $C^*$ or von-Neumann algebra), completely positive real map for each $t\geq0.$ A QDS is called conservative if $T_t(1)=1.$

Let $\cll$ be the generator of a QDS $(T_t)_{t\geq0}$ on $\cla\subseteq B(\clh).$ $\cll$ is said to have a Christensen-Evans form if there exists a Hilbert space $\clk,$ a densely defined operator $R\in Lin(\clh,\clk),$ a representation $\rho:\cla\rightarrow B(\clk)$ and a self-adjoint operator $H\in Lin(\clh,\clh)$ such that

\begin{equation*}
\cll(x)=R^*\rho(x)R-\frac{1}{2}(R^*R-\cll(1))x-\frac{1}{2}x(R^*R-\cll(1))+i[H,x]~;
\end{equation*}
for all $x\in Dom(\cll).$ It is known (see \cite{chris,lewis}) that if $\cll$ is a bounded operator, it has Christensen-Evans form.

Conversely given a map which has Christensen-Evans form, under some additional hypotheses one can construct a minimal QDS on $\cla,$ associated to this map. We refer to \cite[p.39]{dgkbs},~\cite{sinha-mohari1,chebo-fagnola} for more discussions on minimal semigroup.
\bdfn
Let $\cla$ be a $C^*$ or von-Neumann algebra equipped with a faithful, lower-semi-continuous trace $\tau.$ Suppose that $(T_t)_{t\geq0}$ is a QDS on $\cla.$ Then $(T_t)_{t\geq0}$ is said to be symmetric with respect to $\tau$ if
\begin{equation*}
\tau(T_t(x)y)=\tau(xT_t(y))~\mbox{for all $x,y$ such that $T_t(x)y\in Dom(\tau).$}
\end{equation*}
\edfn
A symmetric QDS always extends to a $C_0$ semigroup of self-adjoint operators in the Hilbert space $L^2(\tau).$ We will denote by $\cll_2,$ the generator of the $L^2(\tau)$-extension of the symmetric QDS. We refer the reader to \cite[p.61-68]{dgkbs},~\cite{cipriani} for a detailed discussion of symmetric QDS. Theorem 3.2.30 in page 65 of \cite{dgkbs} and Theorem 3.2.31 in page 68 of \cite{dgkbs} together implies that the generator of a symmetric QDS, under some conditions, has a Christensen-Evans form.
\subsection{Hudson-Parthasarathy dilation of a quantum dynamical semigroup}
\bdfn
A Hudson-Parthasarathy dilation (HP dilation for short) of a QDS $(T_t)_{t\geq0}$ on a $C^*$ or von-Neumann algebra $\cla\subseteq B(h)$ is given by a family $(U_t)_{t\geq0}$ of unitary operators acting on $h\ot\Gamma,$ such that the following holds:
\begin{enumerate}
\item[(i)]
$U_t$ satisfies a QSDE of the form (\ref{qsde}) with initial condition $U_0=I.$
\item[(ii)]
For all $u,v\in h,~x\in\cla,$ $$\innerl ve(0),U_t(x\ot I)U_t^*ue(0)\innerr=\innerl v,T_t(x)u\innerr.$$
\end{enumerate}
\edfn
It is known that QDS with bounded generator always admits HP dilation (see \cite{dgkbs,lindsay-wills}). Some partial results are also known for QDS with unbounded generator (see \cite{dgkbs,chebo-fagnola,accardi-kozyrev1}).

The main goal of this paper is to prove the following theorem:

\bthm\label{main theorem}
Suppose $(T_t)_{t\geq0}$ is a conservative, symmetric QDS on $B(h)$ (symmetric with respect to the canonical trace), with ultraweak generator $\cll.$ Then $(T_t)_{t\geq0}$ always admits an HP dilation.
\ethm

Before proving Theorem \ref{main theorem}, we recall some facts about unbounded derivations in the next section. We refer the reader to \cite{bratteli} for more discussions on the topic.

\section{Unbounded derivations.}
For a Hilbert space $\clh,$ let $K(\clh)$ denote the space of compact operators on $\clh.$ A derivation $\delta\in Lin(\cla,\cla),$ where $\cla$ is a $\ast$-algebra, is called symmetric if $\delta(A^*)=\delta(A)^*.$

\begin{ppsn}\cite[p.238]{bratteli}\label{derivation as commutator}
Let $\delta$ be a symmetric derivation defined on a  $\ast$-subalgebra $\cld$ of the bounded operators in a Hilbert space $\clh.$ Let $\Omega\in\clh$ be a unit vector, cyclic for $\cld$ in $\clh$ and denote the corresponding state by $\omega$ (i.e. $\omega(x)=\innerl\Omega,x\Omega\innerr$). Suppose we have $|\omega(\delta(A))|\leq L\{\omega(A^*A)+\omega(AA^*)\}^{\frac{1}{2}}$ for some constant $L.$ Then there exists a symmetric operator $H$ on $\clh$ such that
\begin{equation*}
\begin{split}
&Dom(H)=Dom(\delta)\Omega,\\
&\delta(A)\psi=i[H,A]\psi~for~\psi\in\clh;
\end{split}
\end{equation*}
where $[H,x]:=Hx-xH.$
\end{ppsn}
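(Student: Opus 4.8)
\noindent\textit{Proof proposal.} The plan is to construct $H$ explicitly on the dense subspace $\cld\Omega$ and verify the two required properties by direct computation; the growth hypothesis will enter only once, to manufacture a single representing vector $\xi\in\clh$.

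First I would produce $\xi$. Rewriting the hypothesis as $|\omega(\delta(A))|\le L\{\|A\Omega\|^2+\|A^*\Omega\|^2\}^{1/2}$, the linear functional $A\mapsto\omega(\delta(A))$ on $\cld$ is dominated by the norm of the map $A\mapsto(A\Omega,A^*\Omega)$ into $\clh\oplus\clh$; hence it factors through the range of this map and extends to a bounded functional on its closure. Since $A\mapsto A^*\Omega$ is conjugate-linear that range is only a real-linear subspace, so I would apply the real Riesz lemma to $\mathrm{Re}\,(\omega\circ\delta)$ and reassemble the complex functional (equivalently, work in $\clh\oplus\overline{\clh}$), obtaining $\xi_1,\xi_2\in\clh$ with $\omega(\delta(A))=\innerl\xi_1,A\Omega\innerr+\innerl A^*\Omega,\xi_2\innerr$ for all $A\in\cld$. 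Applying this to $A^*$ and using $\delta(A^*)=\delta(A)^*$ (whence $\omega(\delta(A^*))=\overline{\omega(\delta(A))}$) yields the same formula with $\xi_1,\xi_2$ interchanged, so after averaging I may take $\xi_1=\xi_2=:\xi$, i.e.
\[
\omega(\delta(A))=\innerl\xi,A\Omega\innerr+\innerl A^*\Omega,\xi\innerr\qquad (A\in\cld).
\]

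Next I would prove the key identity: if $A\Omega=0$ then $\delta(A)\Omega=A\xi$. For any $D\in\cld$ expand $\omega(\delta(DA))$ two ways. From $\delta(DA)=\delta(D)A+D\delta(A)$ and $A\Omega=0$ one gets $\omega(\delta(DA))=\innerl D^*\Omega,\delta(A)\Omega\innerr$; from the representation above (again using $A\Omega=0$) one gets $\omega(\delta(DA))=\innerl(DA)^*\Omega,\xi\innerr=\innerl D^*\Omega,A\xi\innerr$. Thus $\innerl D^*\Omega,\delta(A)\Omega-A\xi\innerr=0$ for all $D$, and since $\cld=\cld^*$ and $\Omega$ is cyclic, $\{D^*\Omega:D\in\cld\}=\cld\Omega$ is dense, forcing $\delta(A)\Omega=A\xi$. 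With this I would define $H$ on $\cld\Omega$ by
\[
H(A\Omega):=-i\,\delta(A)\Omega+i\,A\xi .
\]
By the identity just proved this is well defined (the two correction terms cancel when $A\Omega=0$), it is densely defined with $Dom(H)=\cld\Omega=Dom(\delta)\Omega$, and the commutator relation follows from a telescoping computation with the Leibniz rule:
\[
H(AB\Omega)-AH(B\Omega)=-i\delta(AB)\Omega+iAB\xi+iA\delta(B)\Omega-iAB\xi=-i\delta(A)B\Omega ,
\]
i.e. $\delta(A)\psi=i[H,A]\psi$ for every $\psi\in\cld\Omega$, which is the full content of the relation since $Dom(H)=\cld\Omega$. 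For symmetry, moving all operators onto $\Omega$ turns $\innerl H(A\Omega),B\Omega\innerr-\innerl A\Omega,H(B\Omega)\innerr$ into $i\big[\omega(\delta(A^*)B)+\omega(A^*\delta(B))\big]-i\big[\innerl\xi,A^*B\Omega\innerr+\innerl B^*A\Omega,\xi\innerr\big]$; the first bracket equals $\omega(\delta(A^*B))$ by the Leibniz rule and the second equals $\omega(\delta(A^*B))$ by the representation applied to $X=A^*B$, so the difference vanishes.

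The main obstacle is the first step: the estimate controls $\omega\circ\delta$ only against a seminorm that is not complex-linear in $A$, so a naive Riesz argument is unavailable and one must pass through the conjugate Hilbert space (or through real functionals), and the reduction $\xi_1=\xi_2$ genuinely uses the symmetry $\delta(A^*)=\delta(A)^*$. Once $\xi$ is in hand the rest --- well-definedness of $H$, the commutator identity, and symmetry --- is routine algebra resting on the Leibniz rule, cyclicity of $\Omega$, and the defining relation for $\xi$; the only point requiring care there is to interpret the conclusion $\delta(A)\psi=i[H,A]\psi$ on the correct domain $\cld\Omega$.
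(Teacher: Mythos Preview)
The paper does not supply a proof of this proposition; it is quoted from Bratteli--Robinson \cite[p.238]{bratteli} and used as a black box. Your argument is correct and is essentially the proof given in that reference: represent $\omega\circ\delta$ by a vector $\xi$ via a Riesz-type argument (handled, as you note, in $\clh\oplus\overline{\clh}$ because of the conjugate-linearity of $A\mapsto A^*\Omega$), symmetrize using $\delta(A^*)=\delta(A)^*$, define $H(A\Omega)=-i\delta(A)\Omega+iA\xi$, and verify well-definedness, the commutator identity, and symmetry from the Leibniz rule and cyclicity of $\Omega$.
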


\begin{lmma}\label{matrix convergence}
Let $({\bf A})_p,\IB\in M_n(\IC),$ such that ${\bf A}_p\rightarrow\IB$ as $p\rightarrow\infty.$ Suppose that $\lambda\in\IC$ is an eigenvalue of $\IB$ with multiplicity $m.$ Then given a small neighbourhood $U$ of $\lambda,$ there exists $p_0(U)\in\IN$ such that for all $p\geq p_0(U),$ ${\bf A}_p$ will have $m$ eigenvalues in the neighbourhood $U.$
\end{lmma}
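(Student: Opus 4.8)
The plan is to prove Lemma \ref{matrix convergence} using standard facts about the spectrum of a matrix as a continuous function of its entries, realized concretely through the characteristic polynomial and a contour-integral (Rouch\'e-type) argument. First I would observe that the characteristic polynomial $p_{{\bf A}_p}(z)=\det(z I-{\bf A}_p)$ is a monic degree-$n$ polynomial whose coefficients are polynomials in the entries of ${\bf A}_p$; since ${\bf A}_p\to\IB$ in $M_n(\IC)$ (all norms on this finite-dimensional space being equivalent), the coefficients of $p_{{\bf A}_p}$ converge to those of $p_\IB(z)=\det(zI-\IB)$, so $p_{{\bf A}_p}\to p_\IB$ uniformly on compact subsets of $\IC$.

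Next I would fix a small neighbourhood $U$ of $\lambda$; shrinking $U$ if necessary, I may take $U$ to be an open disc centred at $\lambda$ whose closure contains no eigenvalue of $\IB$ other than $\lambda$. Let $C=\partial U$ be the positively oriented boundary circle. Since $p_\IB$ has no zero on $C$, the quantity $\delta:=\min_{z\in C}|p_\IB(z)|$ is strictly positive. By the uniform convergence on the compact set $C$, there is $p_0(U)$ such that $|p_{{\bf A}_p}(z)-p_\IB(z)|<\delta\leq|p_\IB(z)|$ for all $z\in C$ and all $p\geq p_0(U)$. Rouch\'e's theorem then gives that $p_{{\bf A}_p}$ and $p_\IB$ have the same number of zeros, counted with multiplicity, inside $U$; by the choice of $U$ this number is exactly $m$ for $p_\IB$, hence also for $p_{{\bf A}_p}$. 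Since the zeros of the characteristic polynomial counted with multiplicity are precisely the eigenvalues of the matrix counted with (algebraic) multiplicity, ${\bf A}_p$ has $m$ eigenvalues in $U$ for all $p\geq p_0(U)$, which is the assertion.

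There is no serious obstacle here; the only point requiring a little care is the bookkeeping of multiplicities — namely that ``$m$ eigenvalues in $U$'' should be understood with algebraic multiplicity, matching the hypothesis that $\lambda$ is an eigenvalue of $\IB$ ``with multiplicity $m$'', and that Rouch\'e's theorem delivers exactly this counted-with-multiplicity statement. If one instead wanted the statement for geometric multiplicities or for distinct eigenvalues, a separate argument would be needed, but for the algebraic count the contour-integral argument is clean. An alternative, essentially equivalent, route would be to write the spectral projection of ${\bf A}_p$ associated with $U$ as $\frac{1}{2\pi i}\oint_C (zI-{\bf A}_p)^{-1}\,dz$, note that $(zI-{\bf A}_p)^{-1}\to(zI-\IB)^{-1}$ uniformly on $C$ (the resolvent being continuous in the matrix away from the spectrum), deduce convergence of the projections, and conclude equality of ranks for large $p$ since rank is lower semicontinuous and the ranks are integers; I would present the Rouch\'e version as the primary argument since it is the most self-contained.
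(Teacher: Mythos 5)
Your proposal is correct and is essentially the paper's own argument: the paper notes that $\det({\bf A}_p-z{\bf I}_n)\to\det(\IB-z{\bf I}_n)$ and invokes Hurwitz's theorem, which is exactly the Rouch\'e-type contour argument you spell out (Hurwitz's theorem being the standard packaging of that argument for locally uniformly convergent sequences of holomorphic functions). Your version just makes explicit the choice of the disc, the lower bound on the boundary, and the multiplicity bookkeeping.
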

\begin{proof}
Observe that $|det({\bf A}-z{\bf I}_n)-det(\IB-z{\bf I}_n)|\rightarrow0.$ The result now follows by an application of Hurwitz's theorem.
\end{proof}

\begin{lmma}\label{lmma}
Let $\delta$ be a symmetric derivation on $\clb(\clh),$ such that $Dom(\delta)$ is dense in the weak operator topology. Assume that $Dom(\delta)$ is closed under holomorphic functional calculus and $Dom(\delta)\cap K(\clh)\neq\{0\}.$ Then $Dom(\delta)$ contains a rank-one projection.
\end{lmma}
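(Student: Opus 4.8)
The plan is to manufacture a nonzero finite-rank projection inside $Dom(\delta)$ by holomorphic functional calculus applied to a compact element, and then to cut it down to a rank-one projection by exploiting that $Dom(\delta)$ is a weak-operator dense $\ast$-subalgebra of $B(\clh)$. Write $\mathcal D := Dom(\delta)$; since $\delta$ is a symmetric derivation, $\mathcal D$ is closed under products and under the adjoint, i.e.\ a $\ast$-subalgebra of $B(\clh)$.

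First I would fix a nonzero compact operator $K \in \mathcal D \cap K(\clh)$ and pass to $A := K^*K \in \mathcal D$, a nonzero positive compact operator. By the spectral theorem for compact self-adjoint operators, $A$ has a strictly positive eigenvalue $\lambda$, necessarily an isolated point of $\sigma(A)$ with $\dim\ker(A-\lambda)<\infty$. I would then pick a function $f$, holomorphic on a neighbourhood of $\sigma(A)$, equal to $1$ on a small disc about $\lambda$ and to $0$ on a neighbourhood of $\sigma(A)\setminus\{\lambda\}$ — possible exactly because $\lambda$ is isolated — so that $P:=f(A)$ is the Riesz idempotent at $\lambda$, hence (as $A=A^*$) the orthogonal projection onto $\ker(A-\lambda)$. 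Thus $P$ is a nonzero finite-rank projection, and $P\in\mathcal D$ because $\mathcal D$ is closed under holomorphic functional calculus. (Even if $\mathcal D$ is non-unital this is legitimate: when $\dim\clh=\infty$ one has $0\in\sigma(A)$ and $f$ vanishes near $0$, while if $\dim\clh<\infty$ then $B(\clh)=K(\clh)$ and $\mathcal D=B(\clh)$, leaving nothing to prove.)

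Next I would compress by $P$. Since $P\in\mathcal D$ and $\mathcal D$ is an algebra, $P\mathcal D P\subseteq\mathcal D$, and $P\mathcal D P$ is a $\ast$-subalgebra of $PB(\clh)P$, which is $\ast$-isomorphic to $M_r(\IC)$ with $r:=\mathrm{rank}(P)<\infty$. The compression map $x\mapsto PxP$ on $B(\clh)$ is weak-operator continuous, so it carries the weak-operator dense set $\mathcal D$ onto a weak-operator dense subspace of $PB(\clh)P$; as $PB(\clh)P$ is finite-dimensional, this forces $P\mathcal D P=PB(\clh)P$. Finally, choosing any unit vector $\xi$ in the range of $P$, the rank-one projection $|\xi\rangle\langle\xi|$ coincides with $P|\xi\rangle\langle\xi|P$, which therefore lies in $PB(\clh)P=P\mathcal D P\subseteq\mathcal D$, and the lemma follows.

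I do not expect a serious obstacle. The two points that need a little care are the identification of $f(A)$ with the spectral projection of the isolated eigenvalue $\lambda$ (standard Riesz-projection theory for a self-adjoint compact operator, together with the non-unital caveat above) and the observation that weak-operator density of $\mathcal D$ is preserved under compression by the fixed projection $P$; it is at precisely these two spots that the hypotheses ``closed under holomorphic functional calculus'' and ``weak-operator dense'' are each used in an essential way, the finite-dimensionality of the corner $PB(\clh)P$ then doing all the remaining work.
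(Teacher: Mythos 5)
Your argument is correct, and its second half takes a genuinely different and shorter route than the paper's. The first half coincides with the paper: both produce a nonzero finite-rank projection in $Dom(\delta)$ as the Riesz projection attached to an isolated nonzero eigenvalue of $K^*K$, using closure under holomorphic functional calculus (your explicit handling of the non-unital caveat, $f$ vanishing near $0$, is if anything more careful than the paper's). From there the paper proceeds in the Bratteli--Robinson style: it fixes a rank-one subprojection of the Riesz projection $E_\lambda$, approximates it in the strong operator topology by self-adjoint elements $A_n\in Dom(\delta)$, compresses to the finite-dimensional corner where SOT convergence upgrades to norm convergence, invokes the eigenvalue-continuity Lemma \ref{matrix convergence} to find a simple eigenvalue of $E_\lambda A_nE_\lambda$ near $1,$ and takes a second contour integral to land the corresponding rank-one spectral projection in $Dom(\delta)$. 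You instead observe that, $Dom(\delta)$ being a $\ast$-algebra containing your projection $P$, the corner $P\,Dom(\delta)\,P$ lies in $Dom(\delta)$; it is the image of a WOT-dense set under the WOT-continuous compression $x\mapsto PxP$, hence a WOT-dense linear subspace of the finite-dimensional algebra $PB(\clh)P$, hence equal to it; so every rank-one subprojection of $P$ already belongs to $Dom(\delta)$. This dispenses with Lemma \ref{matrix convergence}, the SOT-approximation step and the second functional-calculus step entirely, and yields the slightly stronger conclusion that the whole finite-dimensional corner under $P$ sits inside $Dom(\delta)$; the hypotheses you use are exactly those the paper uses. The paper's longer route is essentially the argument needed in a general $C^*$-algebra, where the compressed corner need not exhaust $PB(\clh)P$; in the present $B(\clh)$ setting with weak-operator density your shortcut is available and preferable.
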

\begin{proof}
The proof is an adaptation of the arguments given in \cite{bratteli}:

Suppose $B\in Dom(\delta)\cap K(\clh)$ and $C=B^*B.$ Choose an eigenvalue $\lambda$ of $C$ and let $E_\lambda$ be the associated finite rank spectral projection. Then
\begin{equation*}
E_\lambda=\frac{1}{2\pi i\lambda}\int_\Gamma d\gamma~ C(\gamma-C)^{-1},
\end{equation*}
where $\Gamma$ is such that it contains the isolated point $\lambda.$ Thus $E_\lambda=\frac{1}{\lambda}Cf(C),$ where $f(\cdot)$ is the holomorphic function $f(z)=z,~z\in\IC.$ Thus $E_\lambda\in Dom(\delta).$ Now choose a projection $P$ such that $E_\lambda PE_\lambda=P.$ Get $A_n\in Dom(\delta)$ such that $A_n=A_n^*$ and $A_n\stackrel{SOT}{\rightarrow}P.$  We have
$E_\lambda A_n E_\lambda\stackrel{SOT}{\rightarrow}E_\lambda PE_\lambda$ which implies that
$\|E_\lambda A_nE_\lambda-E_\lambda PE_\lambda\|\rightarrow0$ since the $C^*$-algebra $E_\lambda \clb(\clh)E_\lambda$ is of finite dimension. Thus for large $n,$ $E_\lambda A_nE_\lambda$ has a simple eigenvalue in a neighbourhood around $1,$ by lemma \ref{matrix convergence}. Let $E$ be the finite-rank projection. Then considering a curve around that simple eigenvalue, we can conclude that $E\in Dom(\delta)$ by a similar argument.
\end{proof}

\begin{lmma}\label{observation after Theorem W}
Let $\delta$ be a symmetric derivation satisfying the hypotheses of Lemma \ref{lmma}. Then there exists a symmetric operator $H$ on $\clh$ such that $Dom(H):=Dom(\delta)\Omega$ and $\delta(x)=i[H,x]$ for all $x\in Dom(\delta),$ for some $\Omega\in\clh,~\|\Omega\|=1.$
\end{lmma}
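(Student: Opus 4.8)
The plan is to derive $H$ directly from Proposition \ref{derivation as commutator}; the task is therefore to produce its two hypotheses — a unit vector $\Omega$ that is cyclic for $Dom(\delta)$, and the quadratic estimate $|\omega(\delta(A))|\le L\{\omega(A^{*}A)+\omega(AA^{*})\}^{1/2}$ for the vector state $\omega=\langle\Omega,\cdot\,\Omega\rangle$. Lemma \ref{lmma} already hands us a rank-one projection $P\in Dom(\delta)$; I would write $P=|\Omega\rangle\langle\Omega|$ with $\|\Omega\|=1$ and take this $\Omega$ as the candidate vector.

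Cyclicity of $\Omega$ comes from the weak-operator density of $Dom(\delta)$: if $\eta$ is orthogonal to $Dom(\delta)\Omega$, then $\langle\eta,A\Omega\rangle=0$ for all $A\in Dom(\delta)$, and approximating the rank-one operator $|\eta\rangle\langle\Omega|$ in the weak operator topology by elements of $Dom(\delta)$ forces $\|\eta\|^{2}=0$. For the estimate, the key point is that compression by a rank-one projection is scalar: $PAP=\omega(A)\,P$. Since $Dom(\delta)$ is a $\ast$-algebra, $PAP\in Dom(\delta)$, so applying $\delta$ and comparing $\delta(\omega(A)P)=\omega(A)\,\delta(P)$ with the Leibniz expansion of $\delta(PAP)$ gives
\[
\omega(A)\,\delta(P)=\delta(P)AP+P\,\delta(A)P+PA\,\delta(P).
\]
Taking the vector state $\omega$ of both sides, and using $P\Omega=\Omega$, the self-adjointness $\delta(P)^{*}=\delta(P^{*})=\delta(P)$, and the identity $P\,\delta(P)\,P=0$ (obtained by applying $\delta$ to $P=P^{2}$ and compressing by $P$, which in particular gives $\omega(\delta(P))=0$ so that the left-hand side vanishes), one is left with
\[
\omega(\delta(A))=-\langle\delta(P)\Omega,A\Omega\rangle-\langle A^{*}\Omega,\delta(P)\Omega\rangle .
\]
Cauchy--Schwarz together with $\|A\Omega\|^{2}=\omega(A^{*}A)$ and $\|A^{*}\Omega\|^{2}=\omega(AA^{*})$ then yields $|\omega(\delta(A))|\le\|\delta(P)\Omega\|\,(\omega(A^{*}A)^{1/2}+\omega(AA^{*})^{1/2})\le\sqrt{2}\,\|\delta(P)\Omega\|\,\{\omega(A^{*}A)+\omega(AA^{*})\}^{1/2}$, which is precisely the hypothesis of Proposition \ref{derivation as commutator} with $L=\sqrt{2}\,\|\delta(P)\Omega\|$. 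That proposition then produces the symmetric operator $H$ with $Dom(H)=Dom(\delta)\Omega$ and $\delta(x)=i[H,x]$ on $Dom(\delta)$, which is the assertion.

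The only genuinely delicate point is recognizing the compression trick — once one notices that $PAP$ is a scalar multiple of $P$, the Leibniz rule funnels $\omega(\delta(A))$ into an expression controlled entirely by the single vector $\delta(P)\Omega$, and the bound is immediate. Everything else (cyclicity, the bookkeeping identities for $\delta(P)$, invoking Proposition \ref{derivation as commutator}) is routine, though one should be careful that $\delta(P)$ is genuinely self-adjoint, as this is used both to make the left-hand side vanish and in the two Cauchy--Schwarz steps.
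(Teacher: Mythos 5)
Your proof is correct and follows essentially the same route as the paper: compress $\delta(A)$ by the rank-one projection supplied by Lemma \ref{lmma}, expand by the Leibniz rule to obtain the quadratic estimate $|\omega(\delta(A))|\le L\{\omega(A^*A)+\omega(AA^*)\}^{1/2}$, and invoke Proposition \ref{derivation as commutator}. Your write-up is in fact somewhat more careful than the paper's, since you explicitly verify the cyclicity of $\Omega$ (via weak-operator density of $Dom(\delta)$) and exploit $PAP=\omega(A)P$ to make the constant explicit.
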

\begin{proof}
Let $E$ be the finite rank projection as~ obtained in Lemma \ref{lmma}. Suppose that $\Omega\in Ran(E)$ such that $\|\Omega\|=1.$ Let $\omega(x)=\innerl \Omega, x\Omega\innerr.$ Then

\begin{equation*}
\begin{split}
|\omega(\delta(A))|&=|\omega(E\delta(A)E)|\\
&\leq|\omega(\delta(EAE))|+|\omega(\delta(E)A)|+|\omega(A\delta(E))|\\
&\leq 3\|\delta(E)\|~[\omega(A^*A)+\omega(AA^*)]^{\frac{1}{2}};
\end{split}
\end{equation*}
so that by Proposition \ref{derivation as commutator}, we have the required result.
\end{proof}
\begin{ppsn}\cite{Naimark1, Naimark2}\label{Naimark}
If $H$ is a densely defined symmetric operator on $\clh$ such that $dim(H-iI)^{\perp}\neq dim(H+iI)^{\perp}.$ Then there exists a Hilbert space $\widehat{\clh}\supseteq\clh$ and a self-adjoint operator $K$ acting on $\widehat{\clh}$ such that $K|_{_{\clh}}=H$ and we have the integral representation
\begin{equation*}
\innerl Hu,v \innerr=\int_{-\infty}^{\infty}t~d\innerl F_tu,v\innerr;
\end{equation*}
for $u\in Dom(H),$ $v\in\clh;$ where $F_t$ is the generalized resolution of identity.
\end{ppsn}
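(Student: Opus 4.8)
The plan is to prove this by combining the Cayley transform with a unitary-dilation argument. First I would reduce to the case where $H$ is closed, replacing it by its closure (which is again symmetric, with the same deficiency subspaces). Symmetry gives $\|(H\pm iI)u\|^2=\|Hu\|^2+\|u\|^2$ for $u\in Dom(H)$, so $H\pm iI$ are injective with closed range, and the Cayley transform $V(H+iI)u:=(H-iI)u$ is a well-defined linear isometry from $\mathcal D:=Ran(H+iI)$ onto $\mathcal R:=Ran(H-iI)$. The orthogonal complements are the deficiency subspaces $\clm_+=\clh\ominus\mathcal D=\ker(H^*-iI)$ and $\clm_-=\clh\ominus\mathcal R=\ker(H^*+iI)$, whose dimensions are the deficiency indices $n_\pm$; by hypothesis $n_+\neq n_-$, so $V$ admits \emph{no} unitary extension inside $\clh$ itself, which is precisely why passing to a larger space is forced.

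The central step is to extend $V$ to a unitary operator $U$ on an enlarged Hilbert space $\widehat{\clh}\supseteq\clh$. I would obtain this from the general fact that a partial isometry between closed subspaces of a Hilbert space always dilates to a unitary on a suitable enlargement: embed $\clh$ into $\widehat{\clh}=\clh\oplus\clg$, where $\clg=\ell^2(\IN)\ot(\clm_+\oplus\clm_-)$ is a shift space, and define $U$ to agree with $V$ on $\mathcal D$ while using the copies inside $\clg$ to absorb the mismatch between $\clm_+$ and $\clm_-$. A (unilateral/bilateral) shift construction on $\clg$ renders $U$ unitary regardless of whether $n_+=n_-$. One must also ensure that $1$ is not an eigenvalue of $U$, i.e. that $I-U$ is injective with dense range: on $\clh$ this is inherited from density of $Dom(H)$, since $(I-V)(H+iI)u=2iu$, and on $\clg$ it can be arranged by choosing the shift part of the construction to have no fixed vectors.

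Then $K:=i(I+U)(I-U)^{-1}$, with $Dom(K)=Ran(I-U)$, is a densely defined self-adjoint operator on $\widehat{\clh}$, and unwinding the Cayley identity on $\mathcal D$ shows $K|_{Dom(H)}=H$, so $K$ is a self-adjoint extension of $H$ living in $\widehat{\clh}$. Applying the spectral theorem to $K$ yields a projection-valued resolution of identity $(E_t)_t$ on $\widehat{\clh}$ with $\innerl Ku,w\innerr=\int_{-\infty}^{\infty}t\,d\innerl E_tu,w\innerr$ for $u\in Dom(K)$ and $w\in\widehat{\clh}$. Finally I would compress back to $\clh$: letting $P$ denote the orthogonal projection of $\widehat{\clh}$ onto $\clh$, define $F_t:=PE_t|_{\clh}$. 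Then $(F_t)$ is an increasing family of positive contractions on $\clh$ with $F_{-\infty}=0$ and $F_{+\infty}=I$, i.e. a generalized resolution of identity; and for $u\in Dom(H)$ and $v\in\clh$ one has $Ku=Hu\in\clh$ together with $Pv=v$, so $\innerl Hu,v\innerr=\innerl Ku,v\innerr=\int_{-\infty}^{\infty}t\,d\innerl E_tu,v\innerr=\int_{-\infty}^{\infty}t\,d\innerl F_tu,v\innerr$, which is exactly the claimed integral representation.

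I expect the main obstacle to be the unitary-dilation step: constructing $U$ on $\widehat{\clh}$ so that it simultaneously extends the Cayley isometry $V$, is genuinely unitary despite $n_+\neq n_-$, and keeps $1$ outside its point spectrum, so that the inverse Cayley transform produces a densely defined self-adjoint $K$ rather than a merely symmetric one. Once $U$ is in hand, verifying $K|_{Dom(H)}=H$ and checking the monotonicity and contractivity of the compressions $F_t=PE_t|_{\clh}$ are routine, and the integral representation drops out directly from the spectral theorem.
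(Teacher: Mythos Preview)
The paper does not supply its own proof of this proposition; it is simply quoted as a classical fact with references to Reed--Simon, so there is nothing to compare against. Your outline is the standard Naimark argument (Cayley transform, unitary dilation of the resulting isometry to a larger space, inverse Cayley transform, compression of the spectral resolution) and is correct in substance; the one technical point that genuinely needs care is exactly the one you flag, namely arranging the dilation $U$ so that $1\notin\sigma_p(U)$, but this can be secured by taking the shift component to be a bilateral shift, which has empty point spectrum.
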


\section{Existence of HP-dilation}\label{HP dilation}
Let $tr_{_{\clh}}$ denote the canonical trace of $\clb(\clh).$ Observe that in this case, the Hilbert space  $L^2(tr_{_{\clh}})$ is identified with the space of Hilbert-smith operators on $\clh$ which we denote by $B_2(\clh).$ Let $\cll_2$ denote the restriction of $\cll$ to $B_2(\clh)$ and $\clb$ denote the associated Dirichlet algebra. Clearly $\clb=Dom((-\cll_2)^{\frac{1}{2}}).$ Note that with respect to the $C^*$-subalgebra $K(\clh),$ the Dirichlet form associated with the semigroup is a $C^*$-Dirichlet form since the $\ast$-subalgebra $\clb$ is norm dense in $K(\clh).$ So the set of results in \cite[p.84-p.89,~p.91,~p.96.]{cipriani} gives the following:
\begin{itemize}
\item There exists a $K(\clh)-K(\clh)$ Hilbert-bi-module $\clk$ and a $\pi$-derivation $\delta_0:\clb\rightarrow\clk$ such that 
\begin{equation*}
\begin{split}
&\innerl\delta_0(x),\delta_0(y)e\innerr_{\clk}=\lim_{\epsilon\rightarrow0}tr_{_{\clh}}\left(K_{\epsilon}(x,y)e\right),\\&\mbox{where}~K_\epsilon(x,y)=\cll_2(1-\epsilon\cll_2)^{-1}(x^*y)-\cll_2(1-\epsilon\cll_2)^{-1}(x^*)y-x^*\cll_2(1-\epsilon\cll_2)^{-1}(y),
\end{split}
\end{equation*}
for $x,y\in\clb,e\in K(\clh)$ where $\pi$ is the left action of $K(\clh)$ on $\clk.$
\item $\delta_0$ viewed as an element of $Lin(L^2(tr_{_{\clh}}),\clk))$ denoted by $R_0,$ has domain $\clb$ such that
\begin{equation}\label{R_0}
\innerl R_0(x),R_0(y)\innerr_\clk=-tr_{_{\clh}}(2\cll(x^*)y),~\mbox{for $x\in Dom(\cll_2)\subseteq\clb$,}
\end{equation}
so that vectors of $\clk$ of the form $\delta_0(x)$ for $x\in Dom(\cll_2)$ belongs to $Dom(R_0^*)$ and hence
\begin{equation}\label{cll_2}
\cll_2=-\frac{1}{2}R_0^*R_0|_{_{Dom(\cll_2)}}.
\end{equation}
We also have $\delta_0(x)y=(R_0x-\pi(x)R_0)y$ for $x,y\in\clb$.
\end{itemize}
Without loss of generality, we may suppose that $\pi:K(\clh)\rightarrow \clb(\clk)$ is a non-degenerate $C^*$-representation. Thus it is equal to a direct sum of irreducibles which are unitarily equivalent to the identity representation.  So $\pi$ extends to $\clb(\clh)$ as a unital normal $\ast$-representation, which we again denote by $\pi.$ Now by GNS construction with respect to $tr_{_{\clh}}$, $\clb(\clh)\subseteq \clb(L^2(tr_{_{\clh}})).$ Thus there exists an isometry $\Sigma:\clk\rightarrow L^2(tr_{_{\clh}})\ot k_0,$ for some separable Hilbert space $k_0$ such that $\pi(x)=\Sigma^*(x\ot1_{k_0})\Sigma$ and $\Sigma\Sigma^*$ commutes with $(x\ot1_{k_0})$ (\cite[Chapter 2]{dgkbs}). Then $\delta:=\Sigma\delta_0$ satisfies $\delta(xy)=\delta(x)y+(x\ot1_{k_0})\delta(y).$ Moreover, equations (\ref{R_0}) and (\ref{cll_2}) hold with $R_0$ replaced by $R:=\Sigma R_0$ and we have the identity
$\delta(x)y=(Rx-(x\ot 1_{k_0})R)y$ for $x,y\in\clb.$ Note that here $\delta:\clb\rightarrow \clb(\clh)\ot k_0,$ since we have 
$L^2(tr_{_{\clh}})\equiv B_2(\clh)\subseteq \clb(\clh)$ and in this case $\|\cdot\|_\infty\leq\|\cdot\|_2.$

Let $\clv_0:=\{\sum_{i=1}^k\lambda_i e_i:~(e_i)_{i}~\mbox{is an orthonormal basis for $k_0$ and $\lambda_i\in\IC$}\}.$

We make an easy observation at this point:

\begin{lmma}\label{LCT}
Suppose that $(\clt_t)_{t\geq0}$ is an ultraweakly continuous $C_0$ (in the ultraweak topology) contractive semigroup on $\clb(\clh),$ with generator $C.$ Let $\clz\subseteq\clb(\clh)$ is a subspace of $\clb(\clh)$ which is closed with respect to a locally convex topology (LCT for short) given by a family of seminorms, say $(p_\alpha)_{\alpha}.$ Suppose that $(\clt_t)_{t\geq0}$ restricted to $\clz$ becomes a $C_0$ (with respect to the LCT described above) semigroup, with generator say $\wC.$ Then if $x\in Dom(C)\cap\clz$ such that $C(x)\in\clz,$ then $x\in Dom(\wC)$ and $\wC(x)=C(x).$
\end{lmma}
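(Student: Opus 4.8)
The plan is to prove this by comparing resolvents, since the equality of generators is most cleanly detected through the resolvent operators. Fix $\lambda > 0$ large enough (in fact any $\lambda > 0$ works for a contraction semigroup, since the spectrum of the generator lies in the closed left half-plane). Then $\lambda - C$ is invertible on $\clb(\clh)$ with $(\lambda - C)^{-1} = \int_0^\infty e^{-\lambda t}\,\clt_t\,dt$, the integral converging in the ultraweak topology; similarly $(\lambda - \wC)^{-1} = \int_0^\infty e^{-\lambda t}\,\clt_t|_{\clz}\,dt$ converging in the LCT on $\clz$.

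First I would argue that for $x \in Dom(C) \cap \clz$ with $C(x) \in \clz$, one has $x = (\lambda - C)^{-1}(\lambda x - C(x))$, and that the vector $\lambda x - C(x)$ lies in $\clz$. The point is then to show that $(\lambda - C)^{-1}$ and $(\lambda - \wC)^{-1}$ agree on elements of $\clz$. This follows if one knows the Laplace transform integral $\int_0^\infty e^{-\lambda t}\,\clt_t(z)\,dt$ computed ultraweakly in $\clb(\clh)$ coincides with the same integral computed in the LCT of $\clz$; since $\clz$ is closed in the LCT and $\clt_t(z) \in \clz$ for all $t$, the LCT-valued integral lies in $\clz$, and one needs the two integrals to represent the same element of $\clb(\clh)$. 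I would verify this by testing against functionals: every ultraweakly continuous functional restricts to something controlled, and provided the LCT on $\clz$ is finer than (or at least compatible with) the ultraweak topology on $\clz$ — which is the natural situation in the intended application, where the seminorms come from Hilbert–Schmidt-type norms dominating $\|\cdot\|_\infty$ on the relevant subspace — convergence in the LCT implies ultraweak convergence, so the two integrals agree. Hence $(\lambda - \wC)^{-1}(\lambda x - C(x)) = (\lambda - C)^{-1}(\lambda x - C(x)) = x$, which shows $x \in Ran((\lambda-\wC)^{-1}) = Dom(\wC)$ and $(\lambda - \wC)(x) = \lambda x - C(x)$, i.e. $\wC(x) = C(x)$.

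Alternatively, and perhaps more elementarily, I would bypass resolvents entirely and argue directly from the definition of the generator as a limit of difference quotients. For $x \in Dom(C) \cap \clz$ with $C(x) \in \clz$, the difference quotients $\tfrac{1}{t}(\clt_t(x) - x)$ converge ultraweakly to $C(x)$; one wants them to converge to $C(x)$ in the LCT as well, which would immediately give $x \in Dom(\wC)$. Writing $\tfrac{1}{t}(\clt_t(x) - x) = \tfrac{1}{t}\int_0^t \clt_s(C(x))\,ds$ — valid because $x \in Dom(C)$ and both sides, a priori ultraweak identities, lie in $\clz$ — and using that $s \mapsto \clt_s(C(x))$ is continuous in the LCT (as $\clt|_\clz$ is a $C_0$ semigroup in that topology and $C(x) \in \clz$), the averaged integral converges in the LCT to $\clt_0(C(x)) = C(x)$ as $t \to 0^+$. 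This gives the result with no appeal to functional-analytic subtleties beyond the fundamental theorem of calculus for vector-valued integrals.

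The main obstacle is the interchange of the two integral representations, i.e. ensuring that the Bochner/weak integral $\int_0^t \clt_s(C(x))\,ds$ computed in the LCT of $\clz$ genuinely represents the same operator as the ultraweakly-computed one. This requires that the LCT be at least as fine as the ultraweak topology restricted to $\clz$, or more precisely that the inclusion $\clz \hookrightarrow \clb(\clh)$ be continuous from the LCT to the ultraweak topology; in the application the seminorms $p_\alpha$ will be such that this holds, so the identity $\tfrac{1}{t}(\clt_t(x)-x) = \tfrac{1}{t}\int_0^t\clt_s(C(x))\,ds$ transfers verbatim. Granting that, everything else is routine: the LCT-continuity of $s \mapsto \clt_s(C(x))$ is just the $C_0$ property of the restricted semigroup, and the conclusion $\wC(x) = C(x)$ is immediate from uniqueness of limits.
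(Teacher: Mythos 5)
Your second, ``more elementary'' argument is precisely the paper's proof: write $\clt_t(x)-x=\int_0^t\clt_s(C(x))\,ds$, note the integrand stays in $\clz$, and use the $C_0$ property of the restricted semigroup to get LCT-convergence of the averaged integral to $C(x)$ as $t\to0^+$. You have also correctly isolated the one point the paper leaves implicit, namely that identifying the ultraweakly computed integral with the LCT-computed one requires the inclusion $\clz\hookrightarrow\clb(\clh)$ to be continuous from the LCT to the ultraweak topology; this holds in the intended application, where the relevant seminorm is the $L^2(tr_{_{\clh}})$-norm, which dominates $\|\cdot\|_\infty$.
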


\begin{proof}
Let $x\in Dom(C)\cap \clz.$ Since $(\clt_t)_{t\geq0}$ is a $C_0$ semigroup with respect to the ultraweak topology of $\clb(\clh),$ we have $\clt_t(x)-x=\int_0^t ds~ \clt_s(C(x)),$ where the integral in convergent in the ultraweak topology. Moreover, as $\clt_t$ is a contraction for each $t\geq0,$ $\int_0^t ds~\clt_s(C(x))\in\clb(\clh).$ But we have $C(x)\in \clz$ and as $(\clt_t)_{t\geq0}$ restricted to $\clz$ is a $C_0$ semigroup with respect to the LCT described in the hypothesis, the integral also converges in this LCT, which implies that $\frac{\clt_t(x)-x}{t}$ converges in this LCT to $C(x).$ Thus we have the required result.  
\end{proof}

\begin{lmma}\label{preparation of Mohari Sinha}
The $\ast$-subalgebra $\clb\subseteq Dom(\innerl\xi,R\innerr^*)$ for $\xi\in\clv_0.$
\end{lmma}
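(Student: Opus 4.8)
The plan is to deduce the statement from a domain property of the single operator $R$, and then to use the $\ast$-algebra structure of $\clb$ together with the relations (\ref{R_0})--(\ref{cll_2}) to reduce matters to a ``base case'' that is handled with the help of conservativity and the Christensen--Evans description of $\cll$.

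\textbf{Reduction to $R^*$.} For $x,y\in\clb$ and $\xi\in\clv_0$, expanding $Rx$ in the fixed orthonormal basis $(e_i)$ of $k_0$ gives at once the identity
\[
\innerl\innerl\xi,R\innerr x,\,y\innerr_{L^2(tr_{\clh})}=\innerl Rx,\,y\ot\xi\innerr_{L^2(tr_{\clh})\ot k_0}.
\]
Hence if $y\ot\xi\in Dom(R^*)$, then $x\mapsto\innerl\innerl\xi,R\innerr x,y\innerr$ is bounded in $\|x\|_{L^2(tr_{\clh})}$, i.e. $y\in Dom(\innerl\xi,R\innerr^*)$. Since $\clv_0$ is the linear span of the $e_i$ and $Dom(R^*)$ is a subspace, it suffices to prove $y\ot e\in Dom(R^*)$ for every $y\in\clb$ and every single basis vector $e$. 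Moreover, by (\ref{R_0})--(\ref{cll_2}) the operator $R$ extends continuously from $Dom(\cll_2)$ to $\clb=Dom((-\cll_2)^{1/2})$ with the form norm $x\mapsto(\|x\|_{L^2(tr_{\clh})}^2+\|Rx\|^2)^{1/2}$, so when estimating $x\mapsto\innerl Rx,y\ot e\innerr$ we may restrict $x$ to $Dom(\cll_2)$.

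\textbf{Peeling off a factor.} Fix $e$ and first take $y=w_1w_2$ with $w_1,w_2\in Dom(\cll_2)$; such products are form-dense in $\clb$ because $\clb$ is a $\ast$-algebra on which multiplication is jointly form-continuous. A short computation using the Leibniz rule $R(ab)=(Ra)b+(a\ot1_{k_0})R(b)$ (equivalently the identity $\delta(a)b=(Ra-(a\ot1_{k_0})R)b$ recorded above) together with the cyclicity of $tr_{\clh}$ yields, for $x\in Dom(\cll_2)$,
\[
\innerl Rx,\,(w_1w_2)\ot e\innerr=\innerl R(xw_2^*),\,w_1\ot e\innerr-\innerl R(w_2^*),\,(x^*w_1)\ot e\innerr .
\]
In the second term $R(w_2^*)$ is a fixed vector, so Cauchy--Schwarz bounds it by $\|R(w_2^*)\|\,\|w_1\|_\infty\,\|x\|_{L^2(tr_{\clh})}$. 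For the first term, once we know that $w_1\ot e\in Dom(R^*)$ it equals $\innerl xw_2^*,R^*(w_1\ot e)\innerr$, bounded by $\|w_2\|_\infty\,\|R^*(w_1\ot e)\|_{L^2(tr_{\clh})}\,\|x\|_{L^2(tr_{\clh})}$. Thus, granting the base case ``$z\ot e\in Dom(R^*)$ for $z\in Dom(\cll_2)$'' with an estimate on $\|R^*(z\ot e)\|$ in terms of the form norm of $z$, one obtains $(w_1w_2)\ot e\in Dom(R^*)$, and a density argument (writing a general $y\in\clb$ as a form-convergent sum of such products) gives $y\ot e\in Dom(R^*)$ for all $y\in\clb$, which together with the first step proves the lemma.

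\textbf{The base case and the main difficulty.} It remains to show $z\ot e\in Dom(R^*)$ for $z\in Dom(\cll_2)$. Here I would use that, by (\ref{R_0})--(\ref{cll_2}), the vectors $R_0(s)$ with $s\in Dom(\cll_2)$ already lie in $Dom(R^*)$ with $R^*R_0(s)=-2\cll_2 s$, together with the conservativity of $(T_t)_{t\geq0}$, which provides an approximate identity $(\theta_n)\sbs\clb$ with $0\leq\theta_n\leq1$, $\theta_n\to1$ strongly and $\|R(\theta_n)\|\to0$; inserting such $\theta_n$ via the Leibniz rule reduces $\innerl Rw,z\ot e\innerr$ ($w\in\clb$) to pairings of the form $\innerl R(w'),R_0(s)\innerr=\innerl w',-2\cll_2 s\innerr$ (bounded by $2\|w'\|_{L^2(tr_{\clh})}\|\cll_2 s\|_{L^2(tr_{\clh})}$) plus pairings of $w$ against the vanishing vectors $\innerl e,R(\theta_n)\innerr$, the identification of the intermediate vectors with genuine elements of $Dom(R^*)$ being where the Christensen--Evans form of $\cll$ — i.e. the concrete description of $R_0$ forced by $\pi$ being a multiple of the identity representation — is used. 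I expect this base case to be the only real obstacle: the point is to arrange the ``integration by parts'' so that the unbounded factor $(-\cll_2)^{1/2}$ is absorbed onto the fixed data ($s$, $z$, $\theta_n$) and not onto the running variable, with a bound proportional to the form norm so that the density argument above closes. Everything else is routine bookkeeping with Hilbert--Schmidt norms and the trace $tr_{\clh}$.
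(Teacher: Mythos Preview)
Your reduction and ``peeling'' step are formally correct, but they do not actually reduce the difficulty: the base case ``$z\otimes e\in Dom(R^*)$ for $z\in Dom(\cll_2)$'' is exactly the assertion $z\in Dom((\theta^i_0)^*)$ for the component derivation $\theta^i_0:=\innerl e_i,\delta(\cdot)\innerr$, and your sketch for it is not a proof. Two concrete problems. First, conservativity here means $\cll(1)=0$ in the ultraweak sense on $\clb(\clh)$; since $1\notin L^2(tr_{\clh})$ when $\dim\clh=\infty$, there is no reason to expect an approximate identity $(\theta_n)\subset\clb$ with $\|R\theta_n\|\to0$, and you do not produce one. Second, even granting such $(\theta_n)$, your plan to ``insert $\theta_n$ via the Leibniz rule and reduce to pairings $\innerl R(w'),R_0(s)\innerr$'' does not work as stated: the vectors $R_0(s)=\delta(s)$ that you know lie in $Dom(R^*)$ are sums $\sum_i\theta^i_0(s)\ot e_i$, whereas $z\ot e$ is a single-leg tensor; nothing in the Christensen--Evans description of $\cll$ or in the fact that $\pi$ is a multiple of the identity representation lets you rewrite one in terms of the other. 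In short, you have relocated the obstacle rather than removed it, and the density step afterwards would in addition require the quantitative bound $\|R^*(z\ot e)\|\lesssim\|z\|_\clb$, which you also do not establish.

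The paper's argument is genuinely different and avoids this trap. It proves directly that $tr_{\clh}(\theta^i_0(xy))=0$ for all $x,y\in\clb$, which immediately gives $(\theta^i_0)^*\supseteq -(\theta^i_0)^\dagger$ on $\clb$ and hence the lemma. To obtain the trace identity it splits $\theta^i_0=\delta_1+i\delta_2$ into symmetric derivations, uses the $C^1$-functional calculus closure of $\clb$ together with Lemmas~\ref{matrix convergence}--\ref{observation after Theorem W} to write each $\delta_l$ as $i[T_l,\cdot]$ for a symmetric $T_l$, passes to a self-adjoint extension $K$ (Naimark, Proposition~\ref{Naimark}) on a larger space $\widehat{\clh}$, and observes that the associated automorphism group $\alpha_t=e^{itK}(\cdot)e^{-itK}$ is trace-preserving; Lemma~\ref{LCT} then identifies its $L^2$-generator with $\delta_l$ on $\clb$, forcing $tr_{\clh}(\delta_l(xy))=0$. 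None of this structural input (existence of a rank-one projection in the domain, commutator realisation, self-adjoint extension, trace-preserving flow) appears in your outline, and some such ingredient seems unavoidable: boundedness of $x\mapsto\innerl\theta^i_0(x),z\innerr$ in $\|x\|_2$ is a statement about a single unbounded derivation on $B_2(\clh)$, and does not follow from the $L^2$-data $R^*R=-2\cll_2$ alone.
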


\begin{proof}
Note that $\delta:\clb\rightarrow \clb(\clh)\ot k_0$ is a derivation satisfying the identity\\
$\delta(xy)=\delta(x)y+(x\ot 1_{k_0})\delta(y)$ for $x,y\in\clb.$ Let us define $\theta^i_0(\cdot):=\innerl e_i,\delta(\cdot)\innerr.$ Since $\delta$ is a derivation, it follows that $\theta^i_0(\cdot)$ is a derivation and $Dom(\theta^i_0)=\clb,$ for each $i\in\IN.$ We prove that $tr_{_{\clh}}(\theta^i_0(xy))=0$ for all $x,y\in\clb,i\in\IN,$ which will imply the result, because then $(\theta^i_0)^\ast\supseteq\theta^0_i(:=(\theta^i_0)^\dagger,$ defined below.)

Fix an $i\in\IN.$
Recall that in our case, $\clb=Dom((-\cll_2)^\frac{1}{2})\subseteq B_2(\clh).$ Let us define two new derivations $\delta_1:=\frac{\theta^i_0+\theta^{i\dagger}_0}{2}$ and $\delta_2:=\frac{\theta^i_0-\theta^{i\dagger}_0}{2i},$ where $\theta^{i\dagger}_0(x)=(\theta^i_0(x^*))^*.$ Then we have $\delta=\delta_1+i\delta_2$ and $Dom(\delta_1)=Dom(\delta_2)=\clb.$ Moreover, $\delta_1$ and $\delta_2$ are symmetric derivations. By the results in page.103 of \cite{cipriani}, $\clb$ is closed under $C^1$ functional calculus and thus it is closed under holomorphic functional calculus. So by Lemma \ref{lmma}, $Dom(\delta_1)$ contains a finite rank operator. Hence by Lemma \ref{observation after Theorem W}, $\delta_1(x)=i[T,x]$ for some symmetric operator $T$ acting on $\clh$ and we have $Dom(T):=Dom(\delta_1)\Omega,$ where $\Omega\in\clh$ is cyclic for $Dom(\delta_1).$ Now suppose $dim(T-iI)^\perp\neq dim(T+iI)^\perp.$ Let $K$ denotes the self-adjoint extension of $T$ as described in Proposition \ref{Naimark}, so that $K=K^*.$ Let $P:\widehat{\clh}\rightarrow\clh$ be the orthogonal projection. Let $\widehat{\clh}$ be decomposed in the basis of $P$ i.e. $\widehat{\clh}=\clh\oplus\clh^\perp.$ With respect to this decomposition, an operator $S\in \clb(\widehat{\clh})$ can be viewed as a matrix
$\begin{pmatrix}
S_{11}&S_{12}\\
S_{21}&S_{22}
\end{pmatrix},$ where $S_{11}\in \clb(\clh),~S_{12}\in \clb(\clh^\perp,\clh),~S_{21}\in \clb(\clh,\clh^\perp)$ and $S_{22}\in \clb(\clh^\perp).$ Moreover, if $tr_{_{\widehat{\clh}}},~tr_{_{\clh}}$ and $tr_{_{\clh^\perp}}$ denote the canonical traces of the operator algebras $\clb(\widehat{\clh}),~\clb(\clh)$ and $\clb(\clh^\perp)$ respectively, then we have $tr_{_{\widehat{\clh}}}(S)=tr_{_{\clh}}(S_{11})+tr_{_{\clh^\perp}}(S_{22}).$ Consider the ultraweakly continuous  $C_0$ automorphism group $(\alpha_t)_{t\in\IR}$ defined by $\alpha_t(X)=e^{itK}Xe^{-itK}$ for $X\in \clb(\widehat{\clh}).$ Let $A$ denote the generator of the semigroup $(\alpha_t)_{t\geq0}$. Then we have $A(X)=i[K,X],$ for $X\in Dom(A).$ 
Now note that $tr_{_{\widehat{\clh}}}(\alpha_t(x))=tr_{_{\widehat{\clh}}}(x)$ for $x\geq0.$ Thus $(\alpha_t)_{t\in\IR}$ restricted to $L^2(tr_{_{\widehat{\clh}}})$ becomes a contractive group of unitary operators on $L^2(tr_{_{\widehat{\clh}}}),$ which we denote by $(\clu_t)_{t\in\IR}.$ Let $\clp:=\innerl|u\innerr\innerl v|:~u,v\in Dom(K)\innerr_{_{\IC}}.$ Then it follows that $\lim_{t\rightarrow0}\clu_t(X)\stackrel{L^2(tr_{_{\widehat{\clh}}})}{\longrightarrow}X,$ for all $X\in\clp.$ Moreover, as $\clp$ is dense in $L^2(tr_{_{\widehat{\clh}}})$ and $\clu_t$ is a contraction operator on $L^2(tr_{_{\widehat{\clh}}})$ for each $t\in\IR,$ it follows that $(\clu_t)_{t\geq0}$ is a $C_0$ semigroup of operators in $L^2(tr_{_{\widehat{\clh}}}).$ Let its generator be denoted by $\wA.$ Note that $\wA$ is also a derivation. It is easy to see that $L^2(tr_{_{\widehat{\clh}}})=L^2(tr_{_{\clh}})\oplus L^2(tr_{_{\clh^\perp}}).$ Now $\clb\in L^2(tr_{_{\clh}})$ and $A(x)=\delta_1(x)\in L^2(tr_{_{\clh}})$ for $x\in\clb$ and thus by Lemma \ref{LCT}, we have $\clb\subseteq Dom(\wA)$ and $A(x)=\wA(x)=\delta_1(x).$  Furthermore, we have $tr_{_{\widehat{\clh}}}(A(XY))=0$ for $X,Y\in Dom(\widetilde{A}).$  So we have
\begin{equation*}
tr_{_{\widehat{\clh}}}(A(XY))=tr_{_{\clh}}(A(XY))=tr_{_{\clh}}(\delta_1(XY))=0,
\end{equation*}
for all $X,Y\in \clb.$ Likewise, one may prove $tr_{_{\clh}}(\delta_2(XY))=0$ for $X,Y\in \clb.$ Thus we have $tr_{_{\clh}}(\theta^i_0(xy))=0$ for $x,y\in \clb.$ Observe that if the deficiency indices of $T,$ i.e. the numbers $dim(T-iI)^\perp$ and $dim(T+iI)^\perp$ are equal, then $T$ has a self-adjoint extension which belongs to $Lin(\clh,\clh).$ Then we may repeat the same argument as above and reach the same conclusion. Hence the lemma is proved.
\end{proof}

\begin{lmma}\label{final touch}
$Dom(\cll_2)$ is a $\ast$-subalgebra.
\end{lmma}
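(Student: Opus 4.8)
The plan is to prove the two required closure properties separately. Stability of $Dom(\cll_2)$ under the involution is immediate: the QDS is a real map, so $T_t(x^*)=T_t(x)^*$, and the involution is an isometry of $B_2(\clh)=L^2(tr_{_{\clh}})$; hence $\|T_t(x^*)-x^*-t(\cll_2 x)^*\|_2=\|T_t(x)-x-t\cll_2 x\|_2=o(t)$, so $x\in Dom(\cll_2)$ forces $x^*\in Dom(\cll_2)$ with $\cll_2(x^*)=(\cll_2 x)^*$. The real content is stability under multiplication: given $x,y\in Dom(\cll_2)$ I must show $xy\in Dom(\cll_2)$. First note $x,y\in Dom(\cll_2)\subseteq\clb$, so $x^*,y,xy$ all lie in the Dirichlet $\ast$-algebra $\clb$ and $xy\in B_1(\clh)\subseteq L^2(tr_{_{\clh}})$. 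I will exhibit the natural candidate for $\cll_2(xy)$, the ``carr\'e du champ'' vector
\[
w:=\cll_2(x)\,y+x\,\cll_2(y)+\innerl\delta_0(x^*),\delta_0(y)\innerr_{\clk},
\]
and verify directly that $xy\in Dom(\cll_2)$ with $\cll_2(xy)=w$. That $w\in L^2(tr_{_{\clh}})$ is clear: $\cll_2(x),\cll_2(y)\in B_2(\clh)$ and $x,y$ are bounded, so the first two summands lie in $B_2(\clh)$, while, $\delta_0$ taking values in $\clk\subseteq B_2(\clh)\ot k_0$, the bimodule inner product $\innerl\delta_0(x^*),\delta_0(y)\innerr_{\clk}$ is a pairing of Hilbert--Schmidt-valued vectors and hence trace-class, i.e. in $B_1(\clh)\subseteq L^2(tr_{_{\clh}})$.

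Since $(T_t)$ is symmetric, $\cll_2$ is self-adjoint on $L^2(tr_{_{\clh}})$, so by self-adjointness it is enough to establish $\innerl\cll_2(\zeta),xy\innerr=\innerl\zeta,w\innerr$ for all $\zeta\in Dom(\cll_2)$; then $xy\in Dom(\cll_2^*)=Dom(\cll_2)$ and $\cll_2(xy)=w$. Here I would regularise with the resolvents $\Phi_\epsilon:=\cll_2(1-\epsilon\cll_2)^{-1}$, which are bounded, self-adjoint and real on $L^2(tr_{_{\clh}})$ and satisfy $\Phi_\epsilon(z)\to\cll_2(z)$ in $L^2$ for $z\in Dom(\cll_2)$. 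Using that $\cll_2$ is real, that $\Phi_\epsilon(\zeta^*)\to\cll_2(\zeta^*)$ in $B_2(\clh)$, and the trace-symmetry of $\Phi_\epsilon$,
\[
\innerl\cll_2(\zeta),xy\innerr=tr_{_{\clh}}\big(\cll_2(\zeta^*)\,xy\big)=\lim_{\epsilon\to0}tr_{_{\clh}}\big(\zeta^*\,\Phi_\epsilon(xy)\big).
\]
Now apply the defining identity of $K_\epsilon$ with $a=x^*,\ b=y$, which gives $\Phi_\epsilon(xy)=K_\epsilon(x^*,y)+\Phi_\epsilon(x)\,y+x\,\Phi_\epsilon(y)$, and split the trace into three terms. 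For the first, cyclicity of the trace (the relevant products are trace-class) together with the relation $\lim_{\epsilon\to0}tr_{_{\clh}}(K_\epsilon(x^*,y)e)=\innerl\delta_0(x^*),\delta_0(y)e\innerr_{\clk}$ at the compact operator $e=\zeta^*$ yields $tr_{_{\clh}}\big(\zeta^*\innerl\delta_0(x^*),\delta_0(y)\innerr_{\clk}\big)$ in the limit; for the other two, $\Phi_\epsilon(x)\to\cll_2(x)$ and $\Phi_\epsilon(y)\to\cll_2(y)$ in $B_2(\clh)$ with $x,y$ bounded give the limits $tr_{_{\clh}}(\zeta^*\cll_2(x)y)$ and $tr_{_{\clh}}(\zeta^* x\cll_2(y))$. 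Summing, $\innerl\cll_2(\zeta),xy\innerr=tr_{_{\clh}}(\zeta^* w)=\innerl\zeta,w\innerr$, as needed.

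The subtle point, and what I expect to be the main obstacle to make rigorous, is the apparent circularity: one would like to write $\cll_2(xy)=\cll_2(x)y+x\cll_2(y)+\innerl\delta_0(x^*),\delta_0(y)\innerr_{\clk}$ outright, but the left side is not yet known to be defined, and indeed $K_\epsilon(x^*,y)$ need not converge \emph{as an operator} (that would already say $xy\in Dom(\cll_2)$). The resolvent regularisation circumvents this: only after pairing against $\zeta^*$ do all three pieces converge, the first by the \emph{independently} established fact that $tr_{_{\clh}}(K_\epsilon(x^*,y)e)$ has a limit for every compact $e$. All of this --- transferring $\cll_2(1-\epsilon\cll_2)^{-1}$ across the trace and using self-adjointness of $\cll_2$ to characterise its domain --- relies essentially on the symmetry of $(T_t)$. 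Granting the structural results of this section (the construction of $\delta_0$, $\clk$ and $K_\epsilon$ from the Dirichlet form, and $\clb$ being a $\ast$-algebra closed under $C^1$ functional calculus), the argument is complete up to the routine $L^2$-convergence checks, which follow from $x,y,\zeta\in Dom(\cll_2)$ and boundedness of $x,y$.
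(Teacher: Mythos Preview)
Your argument is correct and takes a genuinely different route from the paper. The paper decomposes $\delta$ into coordinate derivations $\theta_0^i=\langle e_i,\delta(\cdot)\rangle$, then invests considerable effort (via Lemma~\ref{preparation of Mohari Sinha} and the machinery of Section~3 on unbounded derivations as commutators with symmetric operators) to show that the Hilbert-space adjoint $\theta_0^{i*}$ restricts to a derivation on $\clb$, namely $-\theta_i^0$. With that in hand it expands $\theta_0^{i*}\theta_0^i(xy)$ by a double Leibniz rule and controls $\sum_i\|\theta_0^{i*}\theta_0^i(xy)\|_2$ by Cauchy--Schwarz, identifying $\cll_2(xy)$ directly. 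Your approach sidesteps the coordinate decomposition: you exploit self-adjointness of $\cll_2$ on $L^2(tr_{_{\clh}})$ and test $xy$ against $\cll_2\zeta$ for arbitrary $\zeta\in Dom(\cll_2)$, using the resolvent regularisation $\Phi_\epsilon$ and the \emph{already granted} weak limit for $K_\epsilon$ (with the compact test element $e=\zeta^*$) to land $xy$ in $Dom(\cll_2^*)=Dom(\cll_2)$. This is softer and more economical for the present lemma --- it uses neither Section~3 nor Lemma~\ref{preparation of Mohari Sinha} (though the latter is still needed later in the proof of the main theorem). What the paper's route buys in exchange is the explicit structural fact that each $\theta_0^{i*}$ is again a derivation on $\clb$, information your duality argument does not extract. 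One notational wrinkle: the paper's $\langle\cdot,\cdot\rangle_\clk$ is scalar-valued, whereas in your formula for $w$ you are tacitly using the $K(\clh)$-valued right inner product; the passage $\langle\delta_0(x^*),\delta_0(y)e\rangle_\clk=tr_{_{\clh}}\big(\langle\delta_0(x^*),\delta_0(y)\rangle_{\mathrm{op}}\,e\big)$ (and the $B_1$-convergence of $\sum_i b_i^* a_i$ for Hilbert--Schmidt components) is routine after applying $\Sigma$, but you should make it explicit.
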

\begin{proof} 
The QDS $(T_t)_{t\geq0}$ is $\ast$-preserving i.e. $T_t(x^*)=(T_t(x))^*$ for each $t\geq0$ and $x$ belonging to $\clb(\clh).$ Thus $Dom(\cll_2)$ is a $\ast$-closed subspace. We prove that $\theta^{i\ast}_0\in Lin(L^2(tr_{_{\clh}}),L^2(tr_{_{\clh}}))$ is a derivation and $\theta^{i\ast}_0(x)=-\theta^0_i(x),~\forall~x\in\clb,$ where $\theta^0_i:=(\theta^i_0)^\dagger$ as follows:

It also follows immediately by using $tr_{_{\clh}}(\theta^i_0(xy))=0$ for $x,y\in\clb$ that $\theta^{i\ast}_0(\cdot)|_{_{\clb}}=\theta^0_i(\cdot).$ Let $x\in Dom(\theta^{\ast i}_0).$ Now note that $\theta^i_0(x)=\delta_1(x)+i~\delta_2(x),~x\in\clb,$ where $\delta_l(\cdot)$ is a symmetric derivation for each $l=1,2.$ Let $\delta(x)=i[T_1,x]$ and $\delta_2(x)=i[T_2,x],$ where $T_1,T_2$ are the symmetric operators obtained by lemma \ref{observation after Theorem W}. Since $\clb\subseteq Dom(\delta_1)\cap Dom(\delta_2),$ following the proof of lemma \ref{lmma}, we see that we can select a common vector $\Omega\in\clh$ such that $\clb\Omega=Dom(T_1)=Dom(T_2).$ Now note that $\theta^i_0(x)=[T,x],$ where $T:=i T_1-T_2.$ It is enough to prove that $\theta^{\ast i}_0(x)=[S,x],$ where $S=-iT_1-T_2.$ We prove this as follows:

Let $\cld:=\innerl|u_1\innerr\innerl u_2|:~u_1,u_2\in\clb\Omega\innerr_{\IC}\subseteq Dom(\delta_1)\cap Dom(\delta_2)\cap L^2(tr_{_{\clh}}).$ Moreover $\clb\Omega$ is dense in $\clh.$  Since 
$\innerl x,\theta^i_0(y)\innerr=\innerl \theta^{\ast i}_0(x),y\innerr$ for all $y\in Dom(\theta^i_0)\cap L^2(tr_{\clh}),$ in particular for $y\in\cld,$ it follows that 
$[S,x]\in \clb(\clh)$ for $x\in Dom(\theta^{\ast i}_0),$ $S$ as described above and hence proved.

Now we have $Dom(\cll_2)\subseteq\cap_{i\geq1}Dom(\theta^{i\ast}_0\theta^i_0)$ and $\sum_{i\geq1}\|(\theta^{i\ast}_0\theta^i_0)x\|_2<\infty$ for $x\in Dom(\cll_2).$ The fact that 
$\theta^i_0(Dom(\cll_2))\subseteq Dom(\theta^{i\ast}_0)$ implies that if $x,y\in Dom(\cll_2),$ then $xy$ belongs to $Dom(\theta^{i\ast}_0\theta^i_0)$ i.e. $xy\in Dom(\theta^{i\ast}_0\theta^i_0)$ for each $i.$ To prove that\\ $xy\in Dom(\cll_2),$ we just need to show that $\sum_{i\geq1}\|(\theta^{i\ast}_0\theta^i_0)xy\|_2<\infty.$ Now for each $i,$ we have 
\begin{equation}\label{turuper-tash}
\begin{split}
(\theta^{i\ast}_0\theta^i_0)xy&=\theta^{i\ast}_0(\theta^i_0(x)y+x\theta^i_0(y)),\\
&=\theta^{i\ast}_0\theta^i_0(x)y+x\theta^{i\ast}_0\theta^i_0(y)+
\theta^i_0(x)\theta^{i\ast}_0(y)+\theta^{i\ast}_0(x)\theta^i_0(y);\\
&=\theta^{i\ast}_0\theta^i_0(x)y+x\theta^{i\ast}_0\theta^i_0(y)-
\theta^i_0(x)\theta^0_i(y)-\theta^0_i(x)\theta^i_0(y),~\mbox{since $x,y\in Dom(\cll_2)\subseteq\clb.$}
\end{split}
\end{equation}
Observe that $\sum_{i\geq1}\|(\theta^{i\ast}_0\theta^i_0)(x)\|_2<\infty$ and $\sum_{i\geq1}\|(\theta^{i\ast}_0\theta^i_0)(y)\|_2<\infty$ since $x,y\in Dom(\cll_2).$ Now
\begin{equation*}
\begin{split}
\|\theta^i_0(x)\theta^0_i(y)\|_2&=
\sqrt{tr_{_{\clh}}((\theta^0_i(y))^*(\theta^i_0(x))^*\theta^i_0(x)\theta^0_i(y))}\\
&\leq\|\theta^0_i(y)\|_2~\sqrt{tr((\theta^i_0(x))^*\theta^i_0(x))}~\mbox{since $\|\cdot\|_\infty\leq\|\cdot\|_2$};
\end{split}
\end{equation*}
so by an application of the Cauchy-Schwartz inequality, we have $\sum_{i\geq1}\|\theta^i_0(x)\theta^0_i(y)\|_2<\infty$ Similarly it can be proved that $\sum_{i\geq1}\|\theta^i_0(y)\theta^0_i(x)\|_2<\infty.$ So we have\\ 
$\sum_{i\geq1}\|(\theta^0_i\theta^i_0)xy\|_2^2<\infty$ which proves the lemma.
\end{proof}
Note that $Dom(\cll_2)$ becomes a $\ast$-subalgebra which is ultraweakly dense in $\clb(\clh)$ as well as dense in $L^2(tr_{_{\clh}})$ (i.e. in the norm $\|\cdot\|_2$). $Dom(\cll_2)$ is also a core for the Dirichlet form $\cle(\cdot,\cdot).$ Furthermore we have $T_t(Dom(\cll_2))\subseteq Dom(\cll_2).$ Thus it is also a core for $\cll.$ Moreover, since $Dom(\cll_2)$ is an algebra, we have 
\begin{equation*}
\begin{split}
&\delta(x)^*\delta(y)=\cll(x^*y)+\cll(x^*)y-x^*\cll(y),\\ 
&\cll(x)=R^*(x\ot 1_{k_0})R-\frac{1}{2}R^*Rx-\frac{1}{2}xR^*R,
\end{split}
\end{equation*}
for $x,y\in Dom(\cll_2)$ (see \cite[Chapter 3]{dgkbs}).
We now return to the proof of the main Theorem.
\bthm
 Suppose $(T_t)_{t\geq0}$ is a conservative QDS on $B(\clh)$ which is symmetric with respect to the canonical trace on $B(\clh).$ Let $\cll$ be the ultraweak generator of $(T_t)_{t\geq0}$ and $\cll_2$ be the generator of the $L^2$ extension of $(T_t)_{t\geq0}.$ Then $(T_t)_{t\geq0}$ always admits HP dilation.
\ethm

\begin{proof}
Consider the following QSDE:
\begin{equation}\label{QSDE}
\frac{dV_t}{dt}=V_t\circ(a^\dagger_\delta(dt)-a_\delta(dt)-\frac{1}{2}R^*R dt),
\end{equation}
with the initial condition $V_0=id.$ We will prove that there exists an unitary co-cycle $(U_t)_{t\geq0}$ which is a solution for the above QSDE. The coefficient matrix associated with the above QSDE is
$\clz=\begin{pmatrix}
-\frac{1}{2}R^*R&-R^*\\
R&0
\end{pmatrix}.$

Let $G_n=(1-\frac{\cll_2}{n})^{-1},$
$\clz^{(n)}=\begin{pmatrix}
-\frac{1}{2}G_nR^*R G_n&-G_nR^*\\
R G_n&0
\end{pmatrix}$ and $(e_i)_{i\in\IN}$ be an orthonormal basis for $k_0.$ For $\xi\in \clv_0,$ suppose $\hat{\xi}:=1\oplus\xi.$ We first prove that for $\omega\in Dom(\cll_2),$ $sup_{n\geq1}\|\clz^{(n)}_{\hat{\xi}}\omega\|^2<\infty.$ We have

\begin{equation*}
\begin{split}
\|R G_n\omega\|&=\innerl R G_n\omega,R G_n \innerr\\
&=\innerl\omega,G_n^*(-2\cll_2)G_n\omega\innerr\\
&=\innerl\omega,(-2\cll_2)^{\frac{1}{2}}G_n^*G_n(-2\cll_2)^{\frac{1}{2}}\omega\innerr\\
&=\|G_n(-2\cll_2)^{\frac{1}{2}}\omega\|^2\\
&\leq\|(-2\cll_2)^{\frac{1}{2}}\omega\|^2.
\end{split}
\end{equation*}
By Lemma \ref{preparation of Mohari Sinha} we have $\omega\xi:=\omega\ot\xi\in Dom(R^*).$ Thus

\begin{equation*}
\begin{split}
\|\clz^{(n)}_{\hat{\xi}}\omega\|^2&=\|-\frac{1}{2}G_nR^*R G_n\omega+G_nR^*(\omega\xi)\|^2
+\|R G_n\omega\|^2\\
&\leq 2\|G_n^2(-2\cll_2)\omega\|^2+2\|G_nR^*(\omega\xi)\|^2+\|R G_n\omega\|^2\\
&\leq 2\|(-2\cll_2)\omega\|^2+\|(-2\cll_2)^{\frac{1}{2}}\omega\|^2+2\|R^*(\omega\xi)\|^2;
\end{split}
\end{equation*}
which implies that $sup_{n\geq1}\|\clz^{(n)}_{\hat{\xi}}\omega\|<\infty.$ We next prove the following:

\begin{equation}\label{first limit}
\lim_{n\rightarrow\infty}\innerl\hat{\eta},\clz^{(n)}_{\hat{\xi}}\omega\innerr=\innerl\hat{\eta},\clz_{\hat{\xi}}\omega\innerr,
\end{equation}
for $\omega\in Dom(\cll_2),$ $\eta,\xi\in\clv_0.$ We have

\begin{itemize}
\item $\lim_{n\rightarrow\infty}-\frac{1}{2}G_nR^*R G_n\omega=-\frac{1}{2}R^*R\omega,$
\item $\lim_{n\rightarrow\infty}R G_n\omega=R\omega~;$
\end{itemize}
for $\omega\in Dom(\cll_2).$ Existence of the limit in (\ref{first limit}) now follows from the above two limits. Thus by Theorem 7.2.1  in page 174 of \cite{dgkbs}, there exists a contractive cocycle $(U_t)_{t\geq0}$ satisfying the QSDE in (\ref{QSDE}). We will prove that the coefficients associated to the QSDE in (\ref{QSDE}), satisfy the hypotheses of Theorem 7.2.3 in page 179 of \cite{dgkbs}. Hence it will follow that $(U_t)_{t\geq0}$ is an unitary cocycle, which will give the required HP dilation of the semigroup $(T_t)_{t\geq0}.$

Since the coefficient matrix is of the form
$Z=\begin{pmatrix}
-\frac{1}{2}R^*R&R^*\\
R&0
\end{pmatrix},
$ hypotheses (i) and (ii) of Theorem 7.2.3 in \cite[p.179]{dgkbs} will hold for $\clz,$ once we prove that the minimal QDS associated with the map
\begin{equation*}
\cll(x)=R^*(x\ot1_{k_0})R-\frac{1}{2}R^*R x-\frac{1}{2}xR^*R
\end{equation*}
for $x\in Dom(\cll_2),$ is conservative (see condition (v) of Theorem 3.2.16 in p.47 of \cite{dgkbs}).

Let $(\widetilde{T}_t)_{t\geq0}$ denote the minimal semigroup associated with the above map and suppose $\tilde{\cll}$ be its generator. We claim that $Dom(\cll_2)\subseteq Dom(\tilde{\cll}).$ Fix any $a\in Dom(\cll_2).$ Let $\cld$ denote the linear span of operators of the form $(1+R^*R)^{-1}\sigma(1+R^*R)^{-1}$ for $\sigma$ belonging to $B_1(L^2(tr_{_{\clh}})).$ Let $tr$ denote the canonical trace of $B(L^2(tr_{_{\clh}}))$. Using explicit forms of $\cll$ and $\tilde{\cll},$ we see that $tr(\cll(a)\rho)=tr(a\widetilde{\cll}_\ast(\rho))$ for $\rho\in\cld,$ where $\widetilde{\cll}_\ast$ denote the generator of the predual semigroup of $(\widetilde{T}_t)_{t\geq0}.$~It is known (by Lemma 3.2.5 in p.42 of \cite{dgkbs}) that $\cld$ is a core for $\widetilde{\cll}.$ So we have $tr(\cll(a)\rho)=tr(a\widetilde{\cll}_\ast(\rho))$ for all $\rho\in Dom(\widetilde{\cll}_\ast).$ Following the proof of Lemma 8.1.22 in p.204 of \cite{dgkbs}, we have $\widetilde{\cll}(a)=\cll(a).$ This implies that $Dom(\cll_2)\subseteq Dom(\widetilde{\cll})$ and as $Dom(\cll_2)$ is a core for $\cll,$ we have $Dom(\cll)\subseteq Dom(\widetilde{\cll})$ and 
$\widetilde{\cll}(a)=\cll(a)$ for all $a\in Dom(\cll).$ Now the symmetric QDS $(T_t)_{t\geq0}$ is conservative. Thus we have $1\in Dom(\cll)$ and $\cll(1)=0$ which implies that $\widetilde{\cll}(1)=0.$ Thus the minimal semigroup 
$(\widetilde{T}_t)_{t\geq0}$ is conservative. Hence by Theorem 7.2.3 in p.179 of \cite{dgkbs}, the cocycle $(U_t)_{t\geq0}$ is unitary, which completes the proof.

\end{proof}

\section*{Acknowledgement}
I thank my supervisor Debashish Goswami for   continuous encouragement and the useful discussions that I had with him.


\begin{thebibliography}{10}

\bibitem{accardi-kozyrev1}
L.~Accardi and S.~V. Kozyrev.
\newblock On the structure of {M}arkov flows.
\newblock {\em Chaos Solitons Fractals}, 12(14-15):2639--2655, 2001.
\newblock Irreversibility, probability and complexity (Les Treilles/Clausthal, 1999).

\bibitem{accardi-lewis}
Luigi Accardi, Alberto Frigerio, and John~T. Lewis.
\newblock Quantum stochastic processes.
\newblock {\em Publ. Res. Inst. Math. Sci.}, 18(1):97--133, 1982.

\bibitem{bratteli}
Ola Bratteli and Derek~W. Robinson.
\newblock {\em Operator algebras and quantum statistical mechanics. 1}.
\newblock Texts and Monographs in Physics. Springer-Verlag, New York, second
  edition, 1987.
\newblock $C\ast$- and $W^\ast$-algebras, symmetry groups, decomposition of
  states.

\bibitem{chebo-fagnola}
A.~M. Chebotarev and F.~Fagnola.
\newblock Sufficient conditions for conservativity of minimal quantum dynamical
  semigroups.
\newblock {\em J. Funct. Anal.}, 153(2):382--404, 1998.

\bibitem{chris}
Erik Christensen and David~E. Evans.
\newblock Cohomology of operator algebras and quantum dynamical semigroups.
\newblock {\em J. London Math. Soc. (2)}, 20(2):358--368, 1979.

\bibitem{cipriani}
Fabio Cipriani and Jean-Luc Sauvageot.
\newblock Derivations as square roots of {D}irichlet forms.
\newblock {\em J. Funct. Anal.}, 201(1):78--120, 2003.

\bibitem{lewis}
D.~E. Evans and J.~T. Lewis.
\newblock Dilations of irreversible evolutions in algebraic quantum theory.
\newblock {\em Comm. Dublin Inst. Adv. Studies Ser. A}, (24):v+104, 1977.

\bibitem{fagnola}
Franco Fagnola and Stephen~J. Wills.
\newblock Solving quantum stochastic differential equations with unbounded
  coefficients.
\newblock {\em J. Funct. Anal.}, 198(2):279--310, 2003.

\bibitem{hudson-krp}
R.~L. Hudson and K.~R. Parthasarathy.
\newblock Quantum {I}to's formula and stochastic evolutions.
\newblock {\em Comm. Math. Phys.}, 93(3):301--323, 1984.

\bibitem{lindsay-wills}
J.~Martin Lindsay and Stephen~J. Wills.
\newblock Markovian cocycles on operator algebras adapted to a {F}ock
  filtration.
\newblock {\em J. Funct. Anal.}, 178(2):269--305, 2000.

\bibitem{sinha-mohari1}
A.~Mohari and K.~B. Sinha.
\newblock Stochastic dilation of minimal quantum dynamical semigroup.
\newblock {\em Proc. Indian Acad. Sci. Math. Sci.}, 102(3):159--173, 1992.

\bibitem{sinha-mohari}
A.~Mohari and Kalyan~B. Sinha.
\newblock Quantum stochastic flows with infinite degrees of freedom and
  countable state {M}arkov processes.
\newblock {\em Sankhy\=a Ser. A}, 52(1):43--57, 1990.

\bibitem{krp}
K.~R. Parthasarathy.
\newblock {\em An introduction to quantum stochastic calculus}, volume~85 of
  {\em Monographs in Mathematics}.
\newblock Birkh\"auser Verlag, Basel, 1992.

\bibitem{Naimark1}
Michael Reed and Barry Simon.
\newblock {\em Methods of modern mathematical physics. {I}. {F}unctional
  analysis}.
\newblock Academic Press, New York, 1972.

\bibitem{Naimark2}
Michael Reed and Barry Simon.
\newblock {\em Methods of modern mathematical physics. {II}. {F}ourier
  analysis, self-adjointness}.
\newblock Academic Press [Harcourt Brace Jovanovich Publishers], New York,
  1975.

\bibitem{dgkbs}
Kalyan~B. Sinha and Debashish Goswami.
\newblock {\em Quantum stochastic processes and noncommutative geometry},
  volume 169 of {\em Cambridge Tracts in Mathematics}.
\newblock Cambridge University Press, Cambridge, 2007.

\end{thebibliography}
\end{document}